\newtheorem{theorem}{Theorem}
\newtheorem{lemma}{Lemma}
\theoremstyle{definition}
\newtheorem{example}{Example}
\theoremstyle{plain}
\newtheorem{proposition}{Proposition}
\def\X{\mathfrak{X}}
\def\T{\mathfrak T}
\def\R{\mathbb R}
\def\C{\mathcal C}
\newcommand{\bb}{\mathbf b}
\newcommand{\cb}{\mathbf c}
\newcommand{\e}{\mathbf e}
\newcommand{\f}{\mathbf f}
\newcommand{\g}{\mathbf g}
\newcommand{\h}{\mathbf h}
\newcommand{\x}{\mathbf x}
\newcommand{\z}{\mathbf z}
\newcommand{\A}{\mathbf A}
\newcommand{\B}{\mathbf B}
\newcommand{\Cb}{\mathbf C}
\newcommand{\G}{\mathbf G}
\newcommand{\I}{\mathbf I}
\newcommand{\K}{\mathbf K}
\newcommand{\M}{\mathbf M}
\newcommand{\N}{\mathbf N}
\newcommand{\Q}{\mathbf Q}
\newcommand{\Sb}{\mathbf S}
\newcommand{\0}{\mathbf 0}
\newcommand{\1}{\mathbf 1}
\newcommand{\eig}{\gamma}
\def\diag{\mathrm{diag}}
\newcommand{\tr}{\mathrm{tr}}
\def\Var{\mathrm{Var}}
\title{Optimal experimental designs for treatment contrasts in heteroscedastic models with covariates}
\author{Samuel Rosa}
\date{\today} 
\begin{document}
	
\maketitle

\begin{abstract}
	In clinical trials, the response of a given subject often depends on the selected treatment as well as on some covariates. We study optimal approximate designs of experiments in the models with treatment and covariate effects. We allow for the variances of the responses to depend on the chosen treatments, which introduces heteroscedasticity into the models. For estimating systems of treatment contrasts and linear functions of the covariates, we extend known results on $D$-optimality of product designs by providing product designs that are optimal with respect to general eigenvalue-based criteria. In particular, $A$- and $E$-optimal product designs are obtained. We then formulate a method based on linear programming for constructing optimal designs with smaller supports from the optimal product designs. The sparser designs can be more easily converted to practically applicable exact designs. The provided results and the proposed sparsification method are demonstrated on some examples.
\end{abstract}

\section{Introduction}

In the experiments performed to estimate the effects of a selected set of treatments, it is common that the responses are also affected by some other experimental conditions (the covariates -- e.g., time trend, block effects, the ages or the gender of the subjects in a clinical trial). The effects of the covariates are traditionally considered to be nuisance effects in the experimental design literature (e.g., \cite{Cox51}, \cite{MajumdarNotz}, \cite{AtkinsonDonev}, \cite{JacrouxMajumdar}, \cite{RosaHarman16}). Recently, \cite{Atkinson15} suggested that particularly with the growing importance of personalized medicine, the covariate effects may also be of prominent interest.

Moreover, the interest often lies in some functions of the model parameters rather than in the parameters themselves. The comparisons of test treatments with a control or with the placebo is a common example, especially in clinical trials. The design and analysis are sometimes further complicated by the presence of heteroscedasticity -- the responses under different treatments may have different variances. In the present paper, we study optimal approximate designs in heteroscedastic models with treatment effects and covariate effects where there is interest in a set of treatment contrasts and in a set of linear combinations of the covariate effects.

For the abovementioned settings, \cite{Atkinson15} studied $D$-optimality in a model with only two treatments. \cite{WangAi} extended his results by providing $D$-optimal product designs for arbitrary numbers of treatments. The $D$-optimality is the most popular criterion in optimal design literature and is particularly nice to work with analytically. However, if the interest lies in a set of treatment contrasts (e.g., in the test treatment-control comparisons, which are a natural choice when placebo is included in the clinical trial), $D$-optimality tends to ignore the special interest in the chosen contrasts; i.e., this criterion tends to select designs that do not provide more information on the contrasts of interest compared to other treatment contrasts. As such, $D$-optimality is generally not recommended for such experimental interests (cf. \cite{HedayatEA}, \cite{MorganWang10}). This also corresponds to the observation by \cite{WangAi} that if the experimental objective is to estimate the test treatment-control comparisons and all covariate effects, then the $D$-optimal designs are the same as if there was interest in all covariate effects and a uniform interest in all the treatments. That is, in such a case, the $D$-optimality does not place any special emphasis on the test treatment-control comparisons. 

In contrast, the $A$-optimality criterion possesses a natural statistical interpretation for the considered settings -- it minimizes the average variance for the linear functions of interest. Hence, $A$-optimality is very popular for the comparisons with the control (e.g., see \cite{HedayatEA}). Recently, $E$-optimality was also argued to be meaningful for estimating treatment contrasts (e.g., \cite{MorganWang}, \cite{Rosa18block}). In this paper, we therefore extend the results by \cite{WangAi} to other eigenvalue-based optimality criteria (see Section \ref{sOptProd}).  In particular, we obtain $\Phi_p$-optimal product designs for the other Kiefer's $\Phi_p$-optimality criteria besides $D$-optimality, including $A$- and $E$-optimality.

The observation that the product designs are generally optimal for multi-factor models (like the treatment-covariate one) is extensively used in the literature; e.g., see \cite{SchwabeWierich}, \cite{Schwabe}, \cite{RodriguezOrtiz}, \cite{GrasshoffEA}. One drawback of the optimal product designs is that they have large supports, and therefore it is sometimes difficult to construct designs for the actual experiments from the product designs by rounding procedures\footnote{The product designs, like other approximate designs, specify only proportions of trials to be performed for the particular experimental conditions, as will be formalized later. To obtain actual integer numbers of trials, some rounding is generally performed.} (e.g., the well-known efficient rounding procedure by \cite{PukelsheimRieder92}). However, in Section \ref{sNonProd}, we provide an entire class of $\Phi$-optimal designs characterized by linear constraints. This allows us to formulate a linear programming method for constructing optimal designs with smaller supports from the optimal product designs. Similar approach was employed by \cite{RosaHarman16} in a homoscedastic model, where the covariate effects were considered to be nuisance parameters.

The application of the theoretical results, and in particular the construction of the optimal designs with sparser supports and their usefulness in obtaining efficient exact designs of experiments is demonstrated on some examples in Section \ref{sExamples}.

\subsection{Notation}

By $\1_n$ and $\0_n$, we denote the vector of ones and the vector of zeros in $\R^n$, respectively. The vector $\e_i$ has 1 on the $i$th position and zeros elsewhere. The identity matrix is denoted by $\I_n$ and the $m \times n$ matrix of zeros is denoted by $\0_{m \times n}$. For brevity, we sometimes omit the subscripts expressing the dimensions. The expression $\diag(a_1,\ldots, a_n)$, where $a_i\in\R$, denotes the diagonal matrix with $a_1, \ldots, a_n$ on diagonal. If $\A$ and $\B$ are matrices, $\diag(\A,\B)$ denotes the corresponding block diagonal matrix. The smallest eigenvalue of a nonnegative definite matrix $\A$ is denoted by $\eig_{\min}(\A)$. Given a matrix $\A$, the symbol $\A^-$ denotes a generalized inverse of $\A$, and $\A^+$ is the Moore-Penrose pseudoinverse of $\A$.

\subsection{The model}

Consider the model
\begin{equation}\label{eModel1}
	y(i,k)=\tau_{i} + \mu + \g^T(k) \beta + \varepsilon(i,k),
\end{equation}
where $i \in \{1,\ldots,v_1\}$ is the chosen treatment and $k \in \T$, $\vert \T \vert = d$, represents the chosen covariates. The vector $\tau=(\tau_1,\ldots,\tau_{v_1})$ represents the treatment effects, $\mu$ is the constant term, $\beta = (\beta_1, \ldots, \beta_{v_2})^T$ are the covariate effects, and $\g(k) \in \R^{v_2}$ is the regression function for the covariates $k$. The errors $\varepsilon$ are uncorrelated with zero mean, and their variance depends on the treatment chosen for the $i$th trial: $\Var(\varepsilon(i,k))=\sigma^2/\lambda_{i}$. The positive function $\lambda: \{1,\ldots,v_1\} \to \R_{++}$ is called the efficiency function and is assumed to be known.

We assume that the set $\T$ of all potential covariates is finite, which is often the case in practice: the covariates are either naturally discrete (e.g., blocks) or the continuous covariates are discretized. For ease of notation, we number the covariates: $\T=\{1,\ldots,d\}$. Model \eqref{eModel1} can be expressed in a compact form as $y(i,k) = \f^T(i,k)\theta + \varepsilon(i,k)$, where $\theta=(\tau^T,\mu,\beta^T)^T$, and $\f(i,k) = (\e_i^T,1,\g^T(k))^T$ is the regression function for the given $i$ and $k$.

The approximate design $\xi$ (or the design $\xi$, in short) is a probability measure on the design space $\X = \{1,\ldots,v_1\} \otimes \{1,\ldots,d\}$; i.e., $\xi$ is a nonnegative function from $\X$ to $\R$ that satisfies $\sum_{i,k} \xi(i,k) = 1$. The value $\xi(i,k)$ represents the proportion of all trials that are performed with treatment $i$ and covariates $k$. The exact design $\xi_e$, which describes an actual experiment consisting of $n$ trials, specifies the number of trials $\xi_e(i,k)$ that are performed with treatment $i$ and covariates $k$ for each $i$ and $k$. The limit on the number of trials means that $\sum_{i,k} \xi_e(i,k) = n$. In this paper, we consider approximate designs, unless specified otherwise.

The moment matrix of a design $\xi$ is $\M(\xi) = \sum_{i,k} \xi(i,k)\lambda_i \f(i,k)\f^T(i,k)$. Let $\h(k) := (1,\g^T(k))^T$; then, $\M(\xi)$ can be expressed in the block form
$$
\M(\xi) = \begin{bmatrix}
\M_{11}(\xi) & \M_{12}(\xi) \\ \M_{12}^T(\xi) & \M_{22}(\xi)
\end{bmatrix},
$$
where $\M_{11}(\xi)=\diag(\lambda_1 w_1, \ldots, \lambda_{v_1} w_{v_1})$, $\M_{12}(\xi)=\big(\sum_k \lambda_1\xi(1,k)\h(k), \ldots, \sum_k \lambda_{v_1}\xi(v_1,k)\h(k)\big)^T$
and $\M_{22}(\xi) = \sum_{i,k} \lambda_i \xi(i,k) \h(k) \h^T(k)$.

The experimental interest in a set of $s_1$ treatment contrasts $\Q_1^T\tau$ and in a set of $s_2$ linear functions of the covariate effects $\K^T\beta$, where $\Q_1\in\R^{v_1 \times s_1}$ and $\K \in \R^{v_2 \times s_2}$, can be expressed as $\A^T\theta$, where
$$\A=\begin{bmatrix}
\Q_1 & \0_{v_1 \times s_2}  \\ \0_{(v_2+1) \times s_1} & \Q_2
\end{bmatrix}=\diag(\Q_1,\Q_2)$$
and $\Q_2^T = (\0_{s_2},\K^T)$. Because $\Q_1^T\tau$ is a system of contrasts, the matrix $\Q_1$ satisfies $\Q^T\1_{v_1} = \0_{s_1}$. The subsystem $\A^T\theta$ is estimable under $\xi$ if $\C(\A) \subseteq \C(\M(\xi))$. In such a case, we say that $\xi$ is feasible for $\A^T\theta$. The constant term $\mu$ is not estimable because of the presence of the treatment effects; therefore, the first row of matrix $\Q_2$ is a row of zeros, as formulated above.
Let both $\Q_1$ and $\K$ be of full column rank. We also suppose that there is interest in all treatments; i.e., no row of $\Q_1$ is a row of zeros. The information matrix for $\A^T\theta$ of a feasible $\xi$ is $\N_\A(\xi) = (\A^T \M^-(\xi) \A)^{-1}$.

A design $\xi^*$ is $\Phi$-optimal if it maximizes $\Phi(\N_\A(\xi))$ for a given functional $\Phi$ of the information matrix. For example, $\xi^*$ is $D$-optimal if it maximizes $\det(\N_\A(\xi))$, $A$-optimal if it maximizes $1/\tr(\N_\A^{-1}(\xi))$ and $E$-optimal if it maximizes $\eig_{\min}(\N_\A(\xi))$. The mentioned optimality criteria can be extended to an entire class of the so-called Kiefer's $\Phi_p$-optimality criteria, $p \in [-\infty,0]$ (see \cite{puk}, Chapter 6):
$$
\Phi_p(\N) = \begin{cases}
(\det(\N))^{1/s}, & p=0; \\
(\frac{1}{s} \tr(\N^p))^{1/p}, & p\in(-\infty,0); \\
\eig_{\min}(\N), & p=-\infty,
\end{cases}
$$
where $\N$ is an $s \times s$ positive definite matrix. The criteria of $D$-, $A$- and $E$-optimality are obtained by setting $p=0$, $-1$ and $-\infty$, respectively.

We require that the optimality criteria possess some basic properties so that they correctly measure the amount of the obtained information. In particular, the criteria must be positively homogeneous, concave, nonnegative, nonconstant and upper semicontinuous; such criteria are called \emph{information functions} (\cite{puk}, Chapter 5). The optimality criteria are usually eigenvalue based, i.e., they depend only on the eigenvalues of the information matrix. In the present paper, we restrict ourselves to eigenvalue-based information functions. One common class of such functions is the class of the $\Phi_p$-criteria.

If the system of interest is rank deficient (i.e., if $\A$ is not of full column rank), then $\A^T\M^-(\xi)\A$ is never non-singular, and therefore $(\A^T\M^-(\xi)\A)^{-1}$ does not exist. In such cases, especially for eigenvalue-based optimality criteria, the information matrix for $\A^T\theta$ is set to be $\N_\A(\xi) = (\A^T\M^-(\xi)\A)^{+}$; see Section 8.18 by \cite{puk}. The $\Phi_p$-optimality criteria are then defined on the \emph{positive} eigenvalues of $\N_\A(\xi)$. The system $\tau_i - \bar{\tau}$ ($i=1,\ldots,v_1$) for estimating centered treatment effects, represented by $\Q_1 = \I_{v_1} - \1_{v_1}\1_{v_1}^T/v_1$, is a simple example of a rank-deficient system of treatment contrasts.

\subsection{Marginal models}

For the first marginal model of \eqref{eModel1}, we consider the heteroscedastic model for treatment effects
\begin{equation}\label{eModelTreat}
	y(i) = \tau_{i} + \varepsilon(i),
\end{equation}
where $\Var(\varepsilon(i))=\sigma^2/\lambda_{i}$. We denote the \emph{marginal treatment design} $w$ as the approximate design in \eqref{eModelTreat}. That is, $w$ specifies $v_1$ nonnegative treatment weights that sum to one; we denote these weights as $w_1, \ldots, w_{v_1}$. The moment matrix of $w$ is $\M_1(w) = \diag(\lambda_1 w_1, \ldots, \lambda_{v_1} w_{v_1})$, and $w$ is feasible for $\Q_1^T\tau$ if $w_i>0$ for all $i=1,\ldots,v_1$, denoted as $w>0$. The information matrix for $\Q_1^T\tau$ of $w>0$ is $\N_{\Q_1}(w) = (\Q_1^T \M_1^{-1}(w) \Q_1)^{-1}$.
\bigskip

For the second marginal model, we consider the homoscedastic model for covariates with constant term:
\begin{equation}\label{eModelNuis}
	y(k) = \mu + \g^T(k) \beta + \varepsilon_k.
\end{equation}
Then, the \emph{marginal covariate design} $\alpha$ is an approximate design in \eqref{eModelNuis}. The marginal covariate design is analogously characterized by the $d$ covariate weights $\alpha_1, \ldots, \alpha_{d}$, and its moment matrix is 
$$
\M_2(\alpha) = \sum_{k=1}^{d}\alpha_k \h(k)\h^T(k) = \begin{bmatrix}
1 & \sum_k \alpha_k \g^T(k) \\
\sum_k\alpha_k \g(k) & \sum_k\alpha_k \g(k) \g^T(k)
\end{bmatrix}.
$$
The subsystem $\K^T\beta$ is estimable if $\C(\Q_2) \subseteq \C(\M_2(\alpha))$, which is equivalent to $\C(\K) \subseteq \Sb(\alpha)$, where $\Sb(\alpha) = \sum_k \alpha_k\g(k)\g^T(k) - (\sum_k \alpha_k \g(k))(\sum_k \alpha_k \g^T(k))$ is the Schur complement of $\M_2(\alpha)$.
The Schur complement of a nonnegative definite matrix in the block form
\begin{equation}\label{ePartitioned}
\B=\begin{bmatrix}
\B_{11} & \B_{12} \\ \B_{12}^T & \B_{22}
\end{bmatrix}
\end{equation}
is $\B_\Sb = \B_{22} - \B_{12}^T \B_{11}^{-1} \B_{12}$. More precisely, $\B_\Sb$ is the Schur complement of $\B_{11}$ in $\B$.

The information matrix for $\K^T\beta$ of a feasible $\alpha$ is $\N_\K(\alpha) = (\Q_2^T \M_2^-(\alpha)\Q_2)^{-1}$. The information matrix can be expressed using the Schur complement, because the matrix
\begin{equation}\label{eGinv}
	\G = \begin{bmatrix}
	\B_{11}^{-1} + \B_{11}^{-1}\B_{12}\B_{\Sb}^-\B_{12}^T \B_{11}^{-1} & - \B_{11}^{-1}\B_{12}\B_{\Sb}^- \\
	-\B_{\Sb}^-\B_{12}^T \B_{11}^{-1} & \B_{\Sb}^-
	\end{bmatrix},
\end{equation}
is a generalized matrix of an arbitrary nonnegative definite matrix $\B$ given by \eqref{ePartitioned}; see Theorem 9.6.1 by \cite{Harville}. Formula \eqref{eGinv} implies for $\M_2(\alpha)$ that $\N_\K(\alpha) = (\K^T \Sb^-(\alpha)\K)^{-1}$.
\bigskip

For a design $\xi$ in model \eqref{eModel1}, the marginal treatment design $w$ of $\xi$ is given by $w_i = \sum_k \xi(i,k)$ ($i=1,\ldots,v_1$) and the marginal covariate design $\alpha$ of $\xi$ is given by $\alpha_k = \sum_i \xi(i,k)$ ($k=1,\ldots,d$).

\section{Optimal product designs}
\label{sOptProd}

The product design $\xi = w \otimes \alpha$ of the marginal designs $w$ and $\alpha$ satisfies $\xi(i,k) = w_i\alpha_k$ ($i=1, \ldots, v_1$, $k=1,\ldots,d$).

\begin{proposition}\label{pProd}
	If $w>0$ and if $\alpha$ is feasible for $\K^T\beta$, then $\xi=w\otimes \alpha$ is feasible for $\A^T\theta$ and its information matrix is
	\begin{equation}\label{eInfMatProd}
	\N_\A(w\otimes\alpha)=\begin{bmatrix}
	\N_{\Q_1}(w) & 0 \\ 0 &  (\sum_i \lambda_i w_i) \N_\K(\alpha)
	\end{bmatrix}.
	\end{equation}
\end{proposition}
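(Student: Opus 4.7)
The plan is to compute the moment matrix $\M(w\otimes\alpha)$ block by block, apply the block generalized inverse formula \eqref{eGinv}, and then evaluate $\A^T\M^-(\xi)\A$ using the block-diagonal structure $\A=\diag(\Q_1,\Q_2)$. Two structural facts drive the whole argument: the marginal condition $\sum_k\alpha_k=1$ turns $\M_{12}(\xi)$ into a rank-one outer product, and the contrast property $\Q_1^T\1_{v_1}=\0_{s_1}$ together with the zero first row of $\Q_2$ eliminate all the cross terms produced by \eqref{eGinv}.

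First I would set $\bar\lambda:=\sum_i\lambda_i w_i$, $\bar\g:=\sum_k\alpha_k\g(k)$, and $\bar\h:=\sum_k\alpha_k\h(k)=(1,\bar\g^T)^T$. Substituting $\xi(i,k)=w_i\alpha_k$ into the definition of $\M(\xi)$ gives $\M_{11}(\xi)=\M_1(w)$, $\M_{12}(\xi)=\mathbf{d}\bar\h^T$ with $\mathbf{d}:=(\lambda_1 w_1,\ldots,\lambda_{v_1}w_{v_1})^T$, and $\M_{22}(\xi)=\bar\lambda\M_2(\alpha)$. Since $\mathbf{d}^T\M_1^{-1}(w)\mathbf{d}=\bar\lambda$ and the top-left entry of $\M_2(\alpha)$ equals $\bar\h_1^2=1$, the Schur complement collapses to
$$
\M_\Sb(\xi)=\bar\lambda\bigl(\M_2(\alpha)-\bar\h\bar\h^T\bigr)=\bar\lambda\,\diag(0,\Sb(\alpha)),
$$
so one may take $\M_\Sb^-(\xi)=\bar\lambda^{-1}\diag(0,\Sb^-(\alpha))$. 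Using the identity $\M_1^{-1}(w)\M_{12}(\xi)=\1_{v_1}\bar\h^T$, the column-space characterization implicit in \eqref{eGinv} reduces the feasibility condition $\C(\A)\subseteq\C(\M(\xi))$ to $\C(\K)\subseteq\C(\Sb(\alpha))$, which is exactly the assumed feasibility of $\alpha$ for $\K^T\beta$.

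Next I would expand $\A^T\M^-(\xi)\A$ block by block using \eqref{eGinv}. The off-diagonal term $\Q_1^T\bigl[\M_1^{-1}(w)\M_{12}(\xi)\M_\Sb^-(\xi)\bigr]\Q_2$ is proportional to $(\Q_1^T\1_{v_1})\bar\h^T\M_\Sb^-(\xi)\Q_2$ and therefore vanishes by the contrast condition. The correction summand in the (1,1) block of \eqref{eGinv} works out to a scalar multiple of $\1_{v_1}\1_{v_1}^T$ and is likewise annihilated by $\Q_1^T$ on both sides, so the (1,1) block of $\A^T\M^-(\xi)\A$ collapses to $\Q_1^T\M_1^{-1}(w)\Q_1=\N_{\Q_1}^{-1}(w)$. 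The zero first row of $\Q_2$ picks out only the $\Sb^-(\alpha)$-block of $\M_\Sb^-(\xi)$, yielding $\bar\lambda^{-1}\K^T\Sb^-(\alpha)\K=\bar\lambda^{-1}\N_\K^{-1}(\alpha)$ as the (2,2) block. Inverting this block-diagonal matrix produces exactly \eqref{eInfMatProd}.

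The main obstacle is purely bookkeeping: carefully tracking the four terms that \eqref{eGinv} contributes to $\A^T\M^-(\xi)\A$ and identifying which pieces are killed by $\Q_1^T\1_{v_1}=\0_{s_1}$ versus by the zero first row of $\Q_2$. Once the factorization $\M_{12}(\xi)=\mathbf{d}\bar\h^T$ is observed, the remainder of the argument is a mechanical verification of $2\times 2$ block identities, with no inequalities or optimization involved.
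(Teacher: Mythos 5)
Your proposal is correct and follows essentially the same route as the paper's proof: compute the blocks of $\M(w\otimes\alpha)$, show the Schur complement equals $(\sum_i\lambda_i w_i)\,\diag(0,\Sb(\alpha))$, use the generalized inverse \eqref{eGinv}, and let $\Q_1^T\1_{v_1}=\0$ and the zero first row of $\Q_2$ annihilate the cross terms and the rank-one correction in the $(1,1)$ block. The only cosmetic difference is that the paper verifies feasibility explicitly via $\M(\xi)\M^-(\xi)\A=\A$ rather than through the column-space inclusion you invoke, but the underlying computation is identical.
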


\begin{proof}
	For $\xi=w \otimes \alpha$, a generalized inverse of $\M(\xi)$ can be expressed as in \eqref{eGinv} using the Schur complement 
	$$\begin{aligned}
	\Sb_1(\xi)
	&= \M_{22}(\xi) - \M_{12}^T(\xi) \M_{11}^{-1}(\xi) \M_{12}(\xi) \\
	&= \Big(\sum_{i=1}^{v_1} \lambda_i w_i\Big)\Big(\sum_{k=1}^{d} \alpha_k \h(k)\h^T(k) - (\sum_{k=1}^{d} \alpha_k \h(k))(\sum_{k=1}^{d} \alpha_k \h^T(k))\Big) \\
	&= \Big(\sum_{i=1}^{v_1} \lambda_i w_i\Big) \begin{bmatrix}
	0 & \0^T \\ \0 & \Sb(\alpha)
	\end{bmatrix}.
	\end{aligned}$$
	Observe that $\M_{11}(\xi) = \M_1(w)$ and that
	$$\begin{aligned}
	\M_{11}^{-1}(\xi)\M_{12}(\xi) 
	&= \diag((\lambda_1 w_1)^{-1}, \ldots, (\lambda_{v_1} w_{v_1})^{-1})\begin{bmatrix}
	\lambda_1 w_1 \\ \vdots \\ \lambda_{v_1} w_{v_1}
	\end{bmatrix}\sum_{k=1}^d\alpha_k \h^T(k)  \\
	&= \1_{v_1} \sum_{k=1}^d\alpha_k \h^T(k).
	\end{aligned}$$
	Because $\Q_1^T\1 = \0 $, we obtain $\Q_1^T \M_{11}^{-1}(\xi)\M_{12}(\xi) = \0$.
	Let us calculate $\M(\xi)\M^-(\xi)\A$, where $\M^-(\xi)$ is given by \eqref{eGinv}. By employing the abovementioned observations, it is straightforward to show that
	$\M(\xi)\M^-(\xi)\A = \diag(\Q_1, \Sb_1(\xi)\Sb_1^-(\xi)\Q_2)$. Because $\alpha$ is feasible for $\K^T\beta$, we have $\C(\K) \subseteq \C(\Sb(\alpha))$, which implies $\C(\Q_2) \subseteq \C(\Sb_1(\xi))$. Then, $\Sb_1(\xi)\Sb_1^-(\xi)\Q_2 = \Q_2$, which yields $\M(\xi)\M^-(\xi)\A = \A$; i.e., $\xi$ is feasible for $\A^T\theta$.
	
	The information matrix of $\xi$ can be expressed using the same generalized inverse $\M^-(\xi)$ and the fact that $\M_{11}^{-1}(\xi)\M_{12}(\xi)\Q_1 = \0$:
	$$
	\N_\A^{-1}(\xi) = \begin{bmatrix}
	\Q_1^T \M_{1}^{-1}(w)\Q_1 & \0 \\ \0 &  \Q_2^T \Sb_1^-(\xi) \Q_2
	\end{bmatrix},
	$$
	which is equivalent to \eqref{eInfMatProd}.
\end{proof}

The following preliminary lemma shows that no design is better with respect to any eigenvalue-based criterion than the product of its marginals.

\begin{lemma}\label{lProd}
	Let $\Phi$ be an eigenvalue-based information function, let $\xi$ be a design in model \eqref{eModel1} and let $w$ and $\alpha$ be its marginal designs. Then,
	$\Phi(\N_\A(\xi)) \leq \Phi(\N_\A(w \otimes \alpha))$.
\end{lemma}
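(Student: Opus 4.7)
My plan is to establish the Loewner comparison $\N_\A(\xi) \le \N_\A(w\otimes\alpha)$; since $\Phi$ is an eigenvalue-based information function and hence isotonic with respect to the Loewner order (see Pukelsheim, Chapter~5), this will immediately give $\Phi(\N_\A(\xi)) \le \Phi(\N_\A(w\otimes\alpha))$. Under feasibility, both precision matrices are positive definite on the range of $\A^T\theta$, and the comparison is equivalent to the reverse inequality $\A^T\M^-(\xi)\A \ge \A^T\M^-(w\otimes\alpha)\A$ between the covariance matrices.

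I would exploit that $\xi$ and $w\otimes\alpha$ share the treatment marginal $w$, so that $\M_{11}(\xi)=\M_1(w)=\M_{11}(w\otimes\alpha)$. Applying the generalized-inverse formula~\eqref{eGinv} to both designs with respect to this common block, the decisive observation from the proof of Proposition~\ref{pProd} is that the $i$th row of $\M_1^{-1}(w)\M_{12}(\xi)$ equals the conditional-mean row $\h_i^{*T}$ with $\h_i^*:=\sum_k(\xi(i,k)/w_i)\h(k)$; for a product design all $\h_i^*$ coincide with $\bar\h=\sum_k\alpha_k\h(k)$, and $\Q_1^T\1=\0$ annihilates the cross term, giving the block-diagonal formula of Proposition~\ref{pProd}. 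For a general $\xi$ these conditional means need not coincide, so the matrix $\Q_1^T\G^*(\xi)$, where $\G^*(\xi)$ stacks the rows $\h_i^{*T}$, is typically nonzero and contributes both a positive semidefinite ``inflation'' of the $(1,1)$ block of $\A^T\M^-(\xi)\A$ and a nontrivial $(1,2)$ block.

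Next I would compare the two Schur complements. Writing $\Sb_1(\xi)=\sum_i\lambda_i w_i\,\Var_{\alpha_i^*}(\h)$ with $\alpha_i^*:=\xi(i,\cdot)/w_i$, and $\Sb_1(w\otimes\alpha)=(\sum_i\lambda_i w_i)\Var_\alpha(\h)$ for $\alpha=\sum_i w_i\alpha_i^*$, the natural comparison is the law-of-total-variance identity $\Var_\alpha(\h)=\sum_i w_i\Var_{\alpha_i^*}(\h)+\sum_i w_i(\h_i^*-\bar\h)(\h_i^*-\bar\h)^T$. Assembling this with the inflation term from $\Q_1^T\G^*(\xi)$ and applying a Schur-complement criterion to the $2\times 2$ block-matrix difference $\A^T\M^-(\xi)\A - \A^T\M^-(w\otimes\alpha)\A$, the difference should come out positive semidefinite, which yields the desired Loewner comparison and closes the argument.

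The hardest step is the final block-matrix check, because in the heteroscedastic setting the weights $\lambda_i w_i$ that appear inside $\Sb_1(\xi)$ do not match the mixture weights $w_i$ that define $\alpha$, so a naive pointwise bound of the form $\Sb_1(\xi) \le \Sb_1(w\otimes\alpha)$ need not be available. Any deficit in the Schur-complement block must then be compensated by the positive contribution in the $(1,1)$ block and the off-diagonal cross-block acting together; making this cancellation explicit is the key algebraic step, and I expect it to be cleanest when one writes $\A^T\M^-(\xi)\A-\diag(\Q_1^T\M_1^{-1}(w)\Q_1,\0)$ as a single quadratic form in $\Sb_1^-(\xi)$ involving the stacked matrix $(\Q_1^T\G^*(\xi),-\Q_2^T)$, and compares it to the analogous form for $w\otimes\alpha$.
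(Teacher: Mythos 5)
Your central strategy cannot work: the Loewner comparison $\N_\A(\xi)\preceq\N_\A(w\otimes\alpha)$ that you propose to establish is false in general, so no amount of care in the ``final block-matrix check'' will make the difference come out positive semidefinite. A minimal counterexample: take $v_1=2$, $\lambda_1=\lambda_2=1$, a single covariate with $g(k)\in\{-1,1\}$, $\Q_1=(1,-1)^T$, $\K=1$, and the design $\xi(1,1)=\xi(2,2)=0.4$, $\xi(1,2)=\xi(2,1)=0.1$, whose marginals are uniform. A direct computation with the generalized inverse \eqref{eGinv} gives
\begin{equation*}
\N_\A(\xi)=\begin{pmatrix}1/4 & -3/10\\ -3/10 & 1\end{pmatrix},
\qquad
\N_\A(w\otimes\alpha)=\begin{pmatrix}1/4 & 0\\ 0 & 1\end{pmatrix},
\end{equation*}
so $\N_\A(w\otimes\alpha)-\N_\A(\xi)$ has eigenvalues $\pm 3/10$ and is indefinite. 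The obstruction is exactly the off-diagonal block of the precision matrix $\A^T\M^-(\xi)\A$ (the term $-\Q_1^T\M_{11}^{-1}\M_{12}\Sb_1^-\Q_2$, which vanishes for a product design but not in general); note that this already happens in the homoscedastic case, so the difficulty is not the mismatch of the weights $\lambda_iw_i$ that you single out. A telling symptom is that your argument never uses the hypothesis that $\Phi$ is \emph{eigenvalue-based}, only isotonicity; if the Loewner comparison held, the lemma would hold for every isotonic information function, and it does not: for the linear information function $\Phi(\N)=\cb^T\N\cb$ with $\cb=(1,-1)^T$ the example above gives $\Phi(\N_\A(\xi))=1.85>1.25=\Phi(\N_\A(w\otimes\alpha))$.

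The missing idea, which is how the paper's proof proceeds, is a symmetrization step that uses the eigenvalue-based property in an essential way. Flipping the sign of the treatment effects conjugates the information matrix by the orthogonal matrix $\diag(-\I,\I)$, which preserves its eigenvalues and hence the value of $\Phi$; concavity of $\Phi$ then shows that the average of $\N_\A(\xi)$ and its conjugate --- i.e., the block-diagonal part of $\N_\A(\xi)$ --- is at least as good as $\N_\A(\xi)$ itself. This averaging annihilates precisely the off-diagonal block that destroys the Loewner comparison in the example above, and only after that does a blockwise Loewner argument (comparing $\M_{11}^{-1}+\M_{11}^{-1}\M_{12}\Sb_1^{-}\M_{12}^T\M_{11}^{-1}$ with $\M_{11}^{-1}$, together with the Schur-complement comparison you correctly identified via the law of total variance) finish the proof. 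Your dissection of the blocks, the conditional means $\h_i^*$, and the variance decomposition is all relevant to that second stage, but without the orthogonal-conjugation-plus-concavity step the argument cannot be closed.
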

\begin{proof}
	Similarly to the proof of Lemma 3.1 by \cite{Schwabe}, let us consider the modification of model \eqref{eModel1}, where $\tau_i$ is changed to $-\tau_{i}$. This corresponds to the change in regressors from $\f(i,k)=(\e_i^T,1,\g^T(k))$ to $\tilde{\f}(i,k)=(-\e_i^T,1,\g^T(k))$. The moment matrix $\tilde{\M}(\xi)$ in the modified model is
	$$\tilde{\M}(\xi)=\begin{bmatrix}
	\M_{11}(\xi) & - \M_{12}(\xi) \\ -\M_{12}^T(\xi) & \M_{22}(\xi)
	\end{bmatrix}.$$
	Then, the information matrix changes from
	$\N_\A(\xi)$
	to
	$$\tilde{\N}_\A(\xi)=\begin{bmatrix}
	-\I & \0 \\ \0 & \I
	\end{bmatrix} \N_\A(\xi) \begin{bmatrix}
	-\I & \0 \\ \0 & \I
	\end{bmatrix}.$$
	Because the matrix $\diag(-\I,\I)$ is orthogonal, the information matrices $\N_\A(\xi)$ and $\tilde{\N}_\A(\xi)$ have the same eigenvalues; hence $\Phi(\N_\A(\xi)) = \Phi(\tilde{\N}_\A(\xi))$. From concavity of $\Phi$ it follows that 
	$$\Phi(\frac{1}{2}(\N_\A(\xi) + \tilde{\N}_\A(\xi))) \geq \frac{1}{2}\Phi(\N_\A(\xi)) + \frac{1}{2}\Phi(\tilde{\N}_\A(\xi)) = \Phi(\N_\A(\xi)).$$
	Moreover, $(\N_\A(\xi) + \tilde{\N}_\A(\xi))/2 = \diag(\N_{11}(\xi),\N_{22}(\xi))=:\N_*(\xi)$. By expressing the generalized inverse $\M^-(\xi)$ through the Schur complement $\Sb_1(\xi)$ (see \eqref{eGinv}), we obtain that 
	$\N_*(\xi) = (\A^T\diag(\M_{11}^{-1} + \M_{11}^{-1}\M_{12} \Sb_1^- \M_{12}^T \M_{11}^{-1}, \Sb_1^-)\A)^{-1}$.
	Moreover, $\diag(\M_{11}^{-1} + \M_{11}^{-1}\M_{12} \Sb_1^- \M_{12}^T \M_{11}^{-1}, \Sb_1^-) \succeq \diag(\M_{11}^{-1}, \Sb_1^-)$, which implies that $\N_*(\xi) \preceq (\A^T\diag(\M_{11}^{-1},\Sb_1^-)\A)^{-1} = \N_\A(w\otimes\alpha)$. Hence, $\Phi(\N_\A(w\otimes\alpha)) \geq \Phi(\N_*(\xi)) \geq \Phi(\N_\A(\xi))$.
\end{proof}

For any eigenvalue-based $\Phi$, Lemma \ref{lProd} implies that if there exists a $\Phi$-optimal design, then there exists a product design that is $\Phi$-optimal among all designs. In particular, if $\xi^*$ is $\Phi$-optimal with marginal treatment design $w^*$ and marginal covariate design $\alpha^*$, then $w^* \otimes \alpha^*$ is $\Phi$-optimal.

\begin{theorem}\label{tOptProd}
	Let $\Phi$ be an eigenvalue-based information function. Then, if there exists a $\Phi$-optimal design for $\A^T\theta$ in \eqref{eModel1}, then there exists a product design in \eqref{eModel1} that is $\Phi$-optimal for $\A^T\theta$.
\end{theorem}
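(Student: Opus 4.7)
The plan is to derive Theorem \ref{tOptProd} as an essentially immediate consequence of Lemma \ref{lProd}. Suppose $\xi^*$ is $\Phi$-optimal for $\A^T\theta$, and let $w^*$ and $\alpha^*$ denote its marginal treatment and marginal covariate designs. I claim that the product $w^* \otimes \alpha^*$ is itself $\Phi$-optimal, which will establish the theorem.

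First I would verify the feasibility preconditions of Proposition \ref{pProd} so that $\N_\A(w^* \otimes \alpha^*)$ is well-defined. Since $\Phi$ is a nonnegative, nonconstant information function and at least one feasible design must exist for $\xi^*$ to be meaningfully optimal, $\xi^*$ itself is feasible for $\A^T\theta$. Because no row of $\Q_1$ is zero, the inclusion $\C(\A) \subseteq \C(\M(\xi^*))$ forces $w_i^* > 0$ for every $i$ (otherwise the $i$th row and column of the block $\M_{11}(\xi^*) = \diag(\lambda_i w_i^*)$ vanish, together with the corresponding row of $\M_{12}(\xi^*)$, so a column of $\A$ drops out of $\C(\M(\xi^*))$). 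An analogous argument on the covariate block, using that $\K$ has full column rank, shows that $\alpha^*$ is feasible for $\K^T\beta$ in the marginal covariate model \eqref{eModelNuis}. Proposition \ref{pProd} then applies and gives the feasibility of $w^* \otimes \alpha^*$ for $\A^T\theta$ along with the explicit block-diagonal expression \eqref{eInfMatProd} for its information matrix.

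Next I would apply Lemma \ref{lProd} directly to $\xi^*$, obtaining
\[
\Phi(\N_\A(\xi^*)) \leq \Phi(\N_\A(w^* \otimes \alpha^*)).
\]
Since $\xi^*$ is $\Phi$-optimal among all feasible designs and $w^* \otimes \alpha^*$ is feasible, the reverse inequality also holds, so we have equality and $w^* \otimes \alpha^*$ is $\Phi$-optimal. In particular, a $\Phi$-optimal product design exists, proving the theorem.

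The only real obstacle is the feasibility bookkeeping in the first step: verifying that feasibility of $\xi^*$ descends to feasibility of each marginal in its respective marginal model. This is a purely linear-algebraic observation that exploits the block structure of $\M(\xi^*)$ together with the standing assumptions on $\Q_1$ and $\K$. Once this is settled, Lemma \ref{lProd} carries all the analytic content of the argument.
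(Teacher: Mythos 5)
Your argument is exactly the paper's: the theorem is stated as an immediate corollary of Lemma \ref{lProd}, obtained by applying the lemma to a $\Phi$-optimal $\xi^*$ and concluding that the product of its marginals is also $\Phi$-optimal. Your additional verification that feasibility of $\xi^*$ forces $w^*>0$ and feasibility of $\alpha^*$ (so that Proposition \ref{pProd} applies) is correct bookkeeping that the paper leaves implicit.
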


Theorem \ref{tOptProd} shows that to obtain a $\Phi$-optimal design it suffices to find a $\Phi$-optimal product design. For $D$-optimality, the problem of finding a $\Phi$-optimal product design simplifies because of its multiplicative form: 
$$\det(\N_\A(w\otimes\alpha)) = (\sum_{i=1}^{v_1} \lambda_i w_i)^{s_2}\det(\N_{\Q_1}(w)) \det(\N_\K(\alpha)).$$ This means that the optimal $w$ and $\alpha$ can be computed separately, which was extensively used by \cite{WangAi}.

For the other $\Phi_p$-optimality criteria, we reduce the complexity of the problem as follows: For $p \in (-\infty,0)$, we have 
\begin{equation}\label{ePhip}
\Phi_p(\N_\A(w\otimes\alpha)) = \Big(s^{-1}\big( \tr(\N_{\Q_1}^p(w)) + (\sum_i \lambda_i w_i)^p  \tr(\N_{\K}^p(\alpha))\big)\Big)^{1/p}.
\end{equation}
The additive form allows one to first calculate the covariate design $\alpha^*$, which is $\Phi_p$-optimal in the marginal model \eqref{eModelNuis},  by maximizing $\Phi_p(\N_\K(\alpha))$ and then calculate the corresponding optimal $w^*$ by maximizing $\Phi_p(\N_\A(w\otimes\alpha^*))$ over all $w$. Then, from \eqref{ePhip} it follows that the product design $w^*\otimes \alpha^*$ is $\Phi_p$-optimal. Analogously, to calculate the $E$-optimal ($p=-\infty$) product design, one can first find the $E$-optimal $\alpha^*$ in \eqref{eModelNuis}, which maximizes $\eig_{\min}(\N_\K(\alpha))$, and then the optimal $w^*$ that maximizes $\eig_{\min}(\N_\A(w\otimes\alpha^*))$. Therefore, for any $\Phi_p$-optimality criterion, the optimization problem of size $v_1 \cdot d$ can be split into two maximization problems of sizes $d$ and $v_1$, respectively, even for $p$ other than $0$.

\begin{theorem}\label{tPhipOpt}
	(i) Let $p \in (-\infty,0)$, let $\alpha^*$ be $\Phi_p$-optimal for $\K^T\beta$ in \eqref{eModelNuis} and denote $\varphi^* = \tr(\N^p_\K(\alpha^*))$. Let $w^*$ maximize $\Phi_p(\N_\A(w\otimes \alpha^*)) = (s^{-1}(\tr(\N^p_{\Q_1}(w)) +  (\sum_i \lambda_i w_i)^p\varphi^*))^{1/p}$. Then, $\xi^* = w^* \otimes \alpha^*$ is $\Phi$-optimal for $\A^T\theta$ in \eqref{eModel1}.
	
	(ii) Let $\alpha^*$ be $E$-optimal for $\K^T\beta$ in \eqref{eModelNuis} and let $w^*$ maximize $\eig_{\min}(\N_\A(w\otimes \alpha^*))$. Then, $\xi^* = w^* \otimes \alpha^*$ is $E$-optimal for $\A^T\theta$ in \eqref{eModel1}.
\end{theorem}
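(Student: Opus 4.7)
The plan is to reduce the joint optimisation to two one-variable problems by exploiting the additive/separable structure that Proposition \ref{pProd} and equation \eqref{ePhip} impose on product designs, and then to invoke Lemma \ref{lProd} to transfer the optimality conclusion from product designs to all designs.

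For part (i), my first step would be to invoke Lemma \ref{lProd}: since $\Phi_p$ is eigenvalue-based, it suffices to show that $w^*\otimes\alpha^*$ maximises $\Phi_p(\N_\A(\cdot))$ across \emph{product} designs. I then rewrite the criterion via \eqref{ePhip}. Because $p<0$, the map $x\mapsto x^{1/p}$ is strictly decreasing on $(0,\infty)$, so maximising $\Phi_p(\N_\A(w\otimes\alpha))$ among product designs is equivalent to minimising
$$F(w,\alpha):=\tr\!\bigl(\N_{\Q_1}^p(w)\bigr)+\Bigl(\sum_i\lambda_i w_i\Bigr)^{p}\tr\!\bigl(\N_\K^p(\alpha)\bigr).$$
The first summand depends only on $w$, and the scalar $(\sum_i\lambda_iw_i)^{p}$ is strictly positive for every feasible $w$. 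Hence, for each fixed feasible $w$, minimising $F(w,\cdot)$ over feasible $\alpha$ is equivalent to minimising $\tr(\N_\K^p(\alpha))$; applying the strictly decreasing transformation in reverse, this is exactly $\Phi_p$-optimality of $\alpha$ in the marginal model \eqref{eModelNuis}, attained by $\alpha^*$. Substituting $\alpha=\alpha^*$ reduces the remaining problem to the one-variable maximisation of $\Phi_p(\N_\A(w\otimes\alpha^*))$, whose maximiser is $w^*$ by hypothesis. Combining with Lemma \ref{lProd} yields $\Phi_p$-optimality of $w^*\otimes\alpha^*$ in \eqref{eModel1}.

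For part (ii), Proposition \ref{pProd} gives the block-diagonal description $\N_\A(w\otimes\alpha)=\mathrm{diag}\bigl(\N_{\Q_1}(w),\,(\sum_i\lambda_iw_i)\N_\K(\alpha)\bigr)$, so its smallest positive eigenvalue equals
$$\min\Bigl\{\eig_{\min}(\N_{\Q_1}(w)),\ \Bigl(\sum_i\lambda_iw_i\Bigr)\eig_{\min}(\N_\K(\alpha))\Bigr\}.$$
For each fixed $w>0$ the factor $\sum_i\lambda_iw_i$ is strictly positive, so maximising this minimum over $\alpha$ reduces to maximising $\eig_{\min}(\N_\K(\alpha))$, achieved by the $E$-optimal $\alpha^*$ of the marginal model. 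Fixing $\alpha^*$ and choosing $w^*$ as in the statement then optimises $\eig_{\min}(\N_\A(w\otimes\alpha^*))$ over $w$, and Lemma \ref{lProd} upgrades $E$-optimality among product designs to $E$-optimality among all designs.

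The main (mild) obstacle is justifying the ``optimise $\alpha$ first, uniformly in $w$'' step; it works here because in both parts the coefficient coupling $\alpha$ to $w$ is strictly positive and enters multiplicatively the only term involving $\alpha$, so the choice of an optimal $\alpha$ is independent of $w$. A secondary technical point is possible rank deficiency of $\Q_1$: since the block-diagonal form from Proposition \ref{pProd} implies that the positive eigenvalues of $\N_\A(w\otimes\alpha)$ are the union of those of $\N_{\Q_1}(w)$ and $(\sum_i\lambda_iw_i)\N_\K(\alpha)$, the pseudoinverse-based definition of $\Phi_p$ on positive eigenvalues still obeys the same additive/minimum decomposition, and the argument above carries over verbatim.
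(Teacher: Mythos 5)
Your proposal is correct and follows essentially the same route as the paper, which derives the theorem from the additive decomposition \eqref{ePhip} (resp.\ the block-diagonal form of Proposition \ref{pProd} for $p=-\infty$) together with Lemma \ref{lProd}. You merely make explicit the step the paper leaves implicit, namely that the strictly positive multiplicative coefficient $(\sum_i\lambda_i w_i)^p$ lets the inner optimisation over $\alpha$ be performed uniformly in $w$.
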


Theorem \ref{tPhipOpt} shows that $\alpha^*$ in the $\Phi_p$-optimal product design $w^* \otimes \alpha^*$ is the $\Phi_p$-optimal marginal covariate design, but $w^*$ is generally \emph{not} $\Phi_p$-optimal in \eqref{eModelTreat}. Due to heteroscedasticity, a slight correction (represented by $\varphi^*$) needs to be present to calculate the ``optimal'' marginal treatment design $w^*$.

\section{Optimal non-product designs}
\label{sNonProd}

In the previous section, we showed that to calculate an optimal design, it suffices to restrict oneself to the product designs. However, in model \eqref{eModel1}, besides an optimal product design, there usually exists a rich class of optimal designs that are not of the product form. Fortunately, once a $\Phi$-optimal product design $\xi^*$ is found, other $\Phi$-optimal designs can generally be constructed from $\xi^*$. The following theorem provides such a construction.

\begin{theorem}\label{tOptNonProd}
	Let $\Phi$ be an eigenvalue-based information function and let $\xi^*=w^*\otimes\alpha^*$ be the $\Phi$-optimal product design for $\A^T\theta$. Let $\xi$ satisfy
	$\M(\xi) \G\A = \A$,
	where 
	$$\G=\diag(\M_1^{-1}(w^*),(\sum_i \lambda_i w_i^*)^{-1}\M_2^-(\alpha^*))$$
	and where $\M_2^-(\alpha^*)$ is any generalized inverse of $\M_2(\alpha^*)$. Then, $\xi$ is $\Phi$-optimal for $\A^T\theta$.
	
	In particular, let $\xi$ satisfy
	\begin{equation}\label{eOptTreat}
		w_i = w_i^*, \quad i=1,\ldots,v_1
	\end{equation}
	\begin{equation}\label{eCovRes}
	\left[
	\frac{1}{w_1^*}\sum_{k=1}^{d} \xi(1,k)\g(k),\, \ldots,\, \frac{1}{w_v^*}\sum_{k=1}^{d} \xi(v_1,k)\g(k)
	\right] \Q_1 = \0_{v_2 \times s_1},
	\end{equation}
	\begin{equation}\label{eTreatRes}
		\begin{bmatrix}
		\sum_k (\xi(1,k) - w_1^*\alpha_k^*)\g^T(k) \\
		\vdots \\
		\sum_k (\xi(v_1,k) - w_{v_1}^*\alpha_k^*)\g^T(k)
		\end{bmatrix} \Sb^-(\alpha^*)\K = \0_{v_1 \times s_2},
	\end{equation}
	\begin{equation}\label{eOptCov}
		\Big(\sum_{i=1}^{v_1} \lambda_i w_i^*\Big)^{-1}\Big((\sum_{i,k} \lambda_i\xi(i,k)\g(k)\g^T(k)) - (\sum_{i,k}\lambda_i \xi(i,k)\g(k))\sum_{k=1}^{v_2}\alpha^*_k \g^T(k)\Big) \Sb^-(\alpha^*)\K = \K,
	\end{equation}
	where $w$ is the marginal treatment design of $\xi$. Then, $\xi$ is $\Phi$-optimal for $\A^T\theta$.
\end{theorem}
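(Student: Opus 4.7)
The plan is to handle the two statements in sequence. The crucial preliminary is to compute $\A^T\G\A$ directly. Substituting the Schur-complement formula \eqref{eGinv} for $\M_2^-(\alpha^*)$ and using $\Q_2^T = (\0_{s_2},\K^T)$ yields $\Q_2^T\M_2^-(\alpha^*)\Q_2 = \K^T\Sb^-(\alpha^*)\K = \N_\K^{-1}(\alpha^*)$, whence by Proposition \ref{pProd},
\begin{equation*}
\A^T\G\A = \diag\bigl(\N_{\Q_1}^{-1}(w^*),\; (\textstyle\sum_i\lambda_i w_i^*)^{-1}\N_\K^{-1}(\alpha^*)\bigr) = \N_\A^{-1}(\xi^*).
\end{equation*}

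For the first part, assume $\M(\xi)\G\A = \A$. Feasibility of $\xi$ for $\A^T\theta$ is immediate since $\A = \M(\xi)(\G\A) \in \C(\M(\xi))$. Transposing the hypothesis and using symmetry of $\M(\xi)$ gives $\A^T\G^T\M(\xi) = \A^T$. Substituting this on the left and $\A = \M(\xi)\G\A$ on the right of $\A^T\M^-(\xi)\A$, and invoking $\M(\xi)\M^-(\xi)\M(\xi) = \M(\xi)$, I obtain
\begin{equation*}
\A^T\M^-(\xi)\A = \A^T\G^T\M(\xi)\M^-(\xi)\M(\xi)\G\A = \A^T\G^T\M(\xi)\G\A = \A^T\G\A = \N_\A^{-1}(\xi^*).
\end{equation*}
Hence $\N_\A(\xi) = \N_\A(\xi^*)$, and since $\Phi$ depends only on $\N_\A$, $\Phi(\N_\A(\xi)) = \Phi(\N_\A(\xi^*))$ proves $\Phi$-optimality of $\xi$.

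For the second part, I expand $\M(\xi)\G\A$ in the $2\times 2$ block structure induced by $\A = \diag(\Q_1,\Q_2)$ and $\G = \diag(\M_1^{-1}(w^*),\,(\sum_i\lambda_i w_i^*)^{-1}\M_2^-(\alpha^*))$, and match each block with the corresponding block of $\diag(\Q_1,\Q_2)$. The upper-left block equals $\diag(w_i/w_i^*)\Q_1$, which is $\Q_1$ precisely under \eqref{eOptTreat}, since $\Q_1$ has no zero row. For the remaining blocks I substitute $\M_2^-(\alpha^*)\Q_2$, which by \eqref{eGinv} has top $1\times s_2$ row $-(\sum_k\alpha_k^*\g^T(k))\Sb^-(\alpha^*)\K$ and bottom $v_2\times s_2$ block $\Sb^-(\alpha^*)\K$. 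The upper-right block then becomes $\diag(\lambda_i)[\sum_k(\xi(i,k)-w_i^*\alpha_k^*)\g^T(k)]_i\,\Sb^-(\alpha^*)\K$, which vanishes by \eqref{eTreatRes}. The lower-left block, split via $\h(k)=(1,\g^T(k))^T$, yields a top scalar row $[w_i/w_i^*]_i\Q_1$ that vanishes by \eqref{eOptTreat} together with $\Q_1^T\1 = \0$, and remaining $v_2$ rows that vanish by \eqref{eCovRes}. Finally, the lower-right block equals $\K$ in its bottom $v_2\times s_2$ part exactly by \eqref{eOptCov}, while its top scalar row vanishes as a consequence of \eqref{eTreatRes} summed against $\lambda_i$ over $i$.

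The main obstacle I anticipate is the bookkeeping for the lower-right block, in particular recognizing that \eqref{eTreatRes} plays a dual role, killing both the upper-right block directly and the top scalar row of the lower-right block via a $\lambda_i$-weighted summation over $i$. Everything else reduces to linear-algebraic manipulations based on \eqref{eGinv} and the explicit block structure of $\M(\xi)$.
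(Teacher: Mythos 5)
Your proof is correct and follows essentially the same route as the paper: establish $\A^T\M^-(\xi)\A=\A^T\G\A=\N_\A^{-1}(\xi^*)$, then unpack $\M(\xi)\G\A=\A$ blockwise using the generalized inverse \eqref{eGinvM2} to recover \eqref{eOptTreat}--\eqref{eOptCov}, including the observation that the top row of the lower-right block follows from a $\lambda_i$-weighted sum of \eqref{eTreatRes}. The only difference is cosmetic: where the paper cites Lemma 1 of \cite{RosaHarman16} for the identity $\A^T\M^-(\xi)\A=\A^T\G\A$, you rederive it directly via $\A^T\G^T\M(\xi)\M^-(\xi)\M(\xi)\G\A$, which makes the argument self-contained.
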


\begin{proof}
	Lemma 1 by \cite{RosaHarman16} shows that if $\tilde{\M}$ is a non-negative definite matrix and if a design $\xi$ satisfies $\M(\xi)\tilde{\M}^-\A=\A$, then $\xi$ is feasible for $\A^T\theta$ and $\A^T\M^-(\xi)\A = \A^T\tilde{\M}^-\A$. We choose $\tilde{\M} = \diag(\M_1(w^*), (\sum_i \lambda_i w_i^*)\M_2(\alpha^*))$ and 
	$$\G=\diag(\M_1^{-1}(w^*), (\sum_i \lambda_i w_i^*)^{-1}\M_2^-(\alpha^*)).$$ 
	Then $\G$ is a generalized inverse of $\tilde{\M}$, and any $\xi$ satisfying $\M(\xi)\G\A = \A$ satisfies  $\A^T\M^-(\xi)\A = \A^T\G\A = \N_\A^{-1}(\xi^*)$; i.e., such $\xi$ is $\Phi$-optimal.
	
	For the second part, for the generalized inverse of $\M_2(\alpha^*)$ we choose
	\begin{equation}\label{eGinvM2}
		\M_2^-(\alpha^*) =
		\begin{bmatrix}
		1 + (\sum_k \alpha^*_k \g^T(k))\Sb^-(\alpha^*) (\sum_k \alpha^*_k \g^T(k)) & -\sum_k \alpha^*_k \g^T(k) \Sb^-(\alpha^*) \\
		-\Sb^-(\alpha^*)\sum_k \alpha^*_k \g(k) & \Sb^-(\alpha^*)
		\end{bmatrix}.
	\end{equation}
	The condition $\M(\xi)\G\A=\A$ consists of the following equalities:
	\begin{equation}\label{eIM1}
		\M_{11}(\xi)\M_{1}^{-1}(w^*) \Q_1 = \Q_1
	\end{equation}
	\begin{equation}\label{eIM2}
		\M_{12}(\xi) \M_{2}^-(\alpha^*)\Q_2 = \0
	\end{equation}
	\begin{equation}\label{eIM3}
		\M_{12}^T(\xi)\M_{1}^{-1}(w^*) \Q_1 = \0
	\end{equation}
	\begin{equation}\label{eIM4}
		(\sum_{i=1}^{v_1}\lambda_i w_i^*)^{-1}\M_{22}(\xi) \M_{2}^-(\alpha^*)\Q_2 = \Q_2
	\end{equation}
	Because no row of $\Q_1$ is a row of zeros, condition \eqref{eIM1} implies that $w_i = w_i^*$. Equality \eqref{eIM2} can be expressed as
	$$
		\Big(\begin{bmatrix}
		\sum_k \lambda_1\xi(1,k)\g^T(k) \\
		\vdots \\
		\sum_k \lambda_v\xi(v,k)\g^T(k)
		\end{bmatrix} -
		\begin{bmatrix}
		\lambda_1 w_1^*  \\
		\vdots \\
		\lambda_v w_v^* 
		\end{bmatrix} \sum_{k=1}^d \alpha^*_k \g^T(k) \Big) \Sb^-(\alpha^*)\K = \0,
	$$
	which can be simplified to \eqref{eTreatRes}.
	Equality \eqref{eIM3} can be simplified to
	$$
	\left[
	\frac{1}{w_1^*}\sum_{k=1}^{v_2} \xi(1,k)\g(k),\, \ldots,\, \frac{1}{w_v^*}\sum_{k=1}^{v_2} \xi(v,k)\g(k)
	\right] \Q_1 = \0.
	$$
	Finally, condition \eqref{eIM4} can be expressed by the following equalities:
	\begin{equation}\label{eIM4a}
	\Big( (\sum_{i,k} \lambda_i \xi(i,k)\g^T(k)) - (\sum_{i=1}^v \lambda_i w_i^*)\sum_{k=1}^{v_2} \alpha_k^* \g^T(k)\Big) \Sb^-(\alpha^*)\K = \0^T,
	\end{equation}
	$$
	\Big(\sum_{i=1}^{v_1} \lambda_i w_i^*\Big)^{-1}\Big((\sum_{i,k} \lambda_i \xi(i,k)\g(k)\g^T(k)) - (\sum_{i,k}\lambda_i\xi(i,k)g(k))\sum_{k=1}^{v_2}\alpha^*_k \g^T(k)\Big) \Sb^-(\alpha^*)\K = \K.
	$$
	However, \eqref{eIM4a} can be simplified to
	$$
	\Big( \sum_{i=1}^{v_1} \lambda_i \sum_{k=1}^{v_2} (\xi(i,k) - w_i^*\alpha_k^*)\g^T(k)\Big) \Sb^-(\alpha^*)\K = \0^T,
	$$
	which follows from \eqref{eTreatRes}.
\end{proof}

Because the conditions in Theorem \ref{tOptNonProd} are linear, optimal designs satisfying these conditions can be calculated via linear programming (LP) once $w^*$ and $\alpha^*$ are computed:
\begin{equation}\label{eLP}
	\min\{ \cb^T\x \ \vert \
	\x \in \R^{v_1 d}, \Cb\x = \bb,
	\x \geq \0 \},
\end{equation}
where $\x$ represents the $v_1 d$ design values, $\Cb\x = \bb$ consists of the conditions given by Theorem \ref{tOptNonProd} and of the design constraint $\sum_j \x_j = 1$. The vector $\cb$ can be chosen arbitrarily, as the objective is to find any $\x\geq\0$ that satisfies $\Cb\x=\bb$.

The possibility of employing linear programming has two crucial advantages. The more obvious one is that it is relatively simple to solve the LP problems, and most mathematical software packages (e.g., MATLAB and R) contain reliable and fast LP solvers. The other advantage is that the vertices of the set of the feasible solutions of \eqref{eLP} have high numbers of zeroes among all feasible solutions; for technical details, see, e.g., Theorem 2.4 by \cite{BertsimasTsitsiklis}. In fact, any vertex solution of \eqref{eLP} is guaranteed to have at least $v_1 d - r$ zeros, where $r = \mathrm{rank}(\Cb)$. This is beneficial, because such designs with small supports can then be obtained by solving the LP problem via the simplex method, as this method provides vertex solutions. Note that the MATLAB implementation of the interior point method also seems to provide vertex solutions. Theorem \ref{tOptNonProd} therefore allows us to formulate a linear programming ``sparsification'' method of the product designs based on solving \eqref{eLP}. In Section \ref{sExamples}, we demonstrate the applicability of this method.

\paragraph{Remarks}
\begin{itemize}
	\item For simpler settings (e.g., in the homoscedastic case or if the interest lies in $\Q_1^T\tau$ only), the obtained results simplify correspondingly. For instance, if the interest lies only in a set of treatment contrasts (i.e., $\A = (\Q_1^T,\0)^T$), then the product design $w^* \otimes \alpha$ is $\Phi$-optimal for any marginal covariate design $\alpha$, where $w^*$ is a $\Phi$-optimal marginal treatment design for $\Q_1^T\tau$. Moreover, for such $\A^T\theta$, any design $\xi$ that satisfies \eqref{eCovRes} and whose marginal treatment design is $w^*$ is $\Phi$-optimal. The designs satisfying \eqref{eCovRes} were denoted as resistant to nuisance effects in a slightly different context by \cite{RosaHarman16}. In the present settings, designs satisfying \eqref{eOptTreat} and \eqref{eCovRes} may be called covariate resistant: if the interest lies only in $\Q_1^T\tau$, then no relevant information is lost under such designs due to the presence of covariates.
	
	\item In the case of a rank-deficient system, both the results of Section \ref{sOptProd} and of Section \ref{sNonProd} hold, as the proofs in Section \ref{sOptProd} can be easily adapted to $\N_\A(\xi) = (\A^T\M^-(\xi)\A)^+$ and Theorem \ref{tOptNonProd} ensures that $\A^T\M^-(\xi)\A = \A^T\tilde{\M}^-\A$, which also implies the equality of the rank-deficient information matrices.
	
	\item If a $\Phi$-optimal design $\xi^*$ is known, other $\Phi$-optimal designs can trivially be found by solving the linear equality $\sum_{i,k} \xi(i,k)\f(i,k)\f^T(i,k) = \M(\xi^*)$ that guarantees that the design $\xi$ has the same moment matrix as $\xi^*$. The conditions in Theorem \ref{tOptNonProd} are more general, as it can be shown that any design satisfying $\M(\xi) = \M(\xi^*)$ also satisfies $\M(\xi)\G\A=\A$ with $\G$ given by Theorem \ref{tOptNonProd}. Conditions \eqref{eOptTreat}-\eqref{eCovRes} can generally be satisfied also by designs $\xi$ that do not have the same moment matrix as $\xi^*$; only the information matrices of $\xi$ and $\xi^*$ are guaranteed to coincide.
\end{itemize}

\section{Examples}
\label{sExamples}

\begin{example}\label{exContCov}
	Consider a model with effects of $v_2$ continuous covariates
	$$
	y(i,\z) = \tau_{i} + \mu + \z^T\beta + \varepsilon,
	$$
	where $\z \in [-1,1]^{v_2}$, as in Example 2 by \cite{WangAi}. By a discretization of the continuous covariates, say $z_i \in \{\pm j/10 \ \vert \ j=0,\ldots,10\}$, the model becomes a special case of \eqref{eModel1}. Suppose that we wish to obtain $A$-optimal designs for estimating the comparisons with the control $\tau_i - \tau_1$, $i=2,\ldots,v_1$ and all the covariate effects; i.e., $\Q=(-\1_{v_1-1},\I_{v_1-1})^T$ and $\K=\I_{v_2}$. Then, the marginal covariate design $\alpha^*$ that is uniform on $\{-1,1\}^{v_2}$ is $A$-optimal for $\K^T\beta$. The corresponding information matrix is $\N_\K(\alpha^*) = \I_{v_2}$, and hence $\varphi^* = \tr(\I_{v_2}^{-1}) = v_2$. Then, the optimal marginal treatment design $w^*$ can be obtained by minimizing $\tr(\Q^T\M_1^{-1}(w)\Q) + v_2(\sum_i \lambda_i w_i)^{-1}$.
	
	Let $v_1 = v_2 = 3$ and let $\lambda_1 = 9$, $\lambda_2 = \lambda_3 = 1$; i.e., the observations under the control treatment have smaller variances. Then, the vector of the optimal treatment weights given by $w^*$ is $(0.236, 0.382, 0.382)^T$, and hence the product design $\xi^* = w^* \otimes \alpha^*$ (see Table \ref{tblOptProd}) is $A$-optimal for $\A^T\theta$. As observed in Theorem \ref{tPhipOpt}, the optimal treatment design $w^*$ depends on $\varphi^*$, which is equal to the number of covariates $v_2$ in the current example. Figure \ref{fEx1} depicts the dependence of $w_1^*$ on $v_2$ for the abovementioned settings.
	
	\begin{table}[t]
		\centering
		\begin{tabular}{l|llllllll}
			\hline\noalign{\smallskip}
			$i\backslash k$ & 1 & 2 & 3 & 4 & 5 & 6 & 7 & 8 \\
			\noalign{\smallskip}\hline\noalign{\smallskip}
			1 & 0.0295  &  0.0295  &  0.0295  &  0.0295  &  0.0295  &  0.0295  &  0.0295  &  0.0295 \\
			2 & 0.0477  &  0.0477  &  0.0477  &  0.0477  &  0.0477  &  0.0477  &  0.0477  &  0.0477 \\
			3 & 0.0477  &  0.0477  &  0.0477  &  0.0477  &  0.0477  &  0.0477  &  0.0477  &  0.0477 \\
			\noalign{\smallskip}\hline
		\end{tabular}
		\caption{Optimal product design for Example \ref{exContCov}. The indices $k=1,\ldots,8$ denote the active covariate values $\z = (-1,-1,-1)^T, (-1,-1,1)^T, \ldots, (1,1,1)^T$ arranged in the lexicographical order; the treatments are denoted by $i=1,2,3$.}
		\label{tblOptProd}
	\end{table}
	\begin{table}[t]
		\centering
		\begin{tabular}{l|llllllll}
			\hline\noalign{\smallskip}
			$i\backslash k$ & 1 & 2 & 3 & 4 & 5 & 6 & 7 & 8 \\
			\noalign{\smallskip}\hline\noalign{\smallskip}
			1  &  0.0378  &  0  &  0.0212  &  0.0591  &  0.0212  &  0.0591  &  0.0378  &  0 \\
			2  &  0  &  0.1909  &  0  &  0  &  0  &  0  &  0.1909  &  0 \\
			3  &  0.1909  &  0  &  0  &  0  &  0  &  0  &  0  &  0.1909 \\
			\noalign{\smallskip}\hline
		\end{tabular}
		\caption{Optimal non-product design for Example \ref{exContCov}; the notation is the same as in Table \ref{tblOptProd}.}
		\label{tblOptNonProd}
	\end{table}
	\begin{table}[t]
		\centering
		\begin{tabular}{l|llllllll}
			\hline\noalign{\smallskip}
			$i\backslash k$ & 1 & 2 & 3 & 4 & 5 & 6 & 7 & 8 \\
			\noalign{\smallskip}\hline\noalign{\smallskip}
			1  &   2  &   0  &   1  &   3  &   1  &   3  &   2  &   0 \\
			2  &   0  &   9  &   0  &   0  &   0  &   0  &   9  &   0 \\
			3  &   9  &   0  &   0  &   0  &   0  &   0  &   0  &   9 \\
			\noalign{\smallskip}\hline
		\end{tabular}
		\caption{Exact design for $n=48$ trials constructed from the optimal non-product design given in Table \ref{tblOptNonProd}.}
		\label{tblOptNonProdR}
	\end{table}
	\begin{figure}[t]
		\centering
		\includegraphics[width=0.5\textwidth]{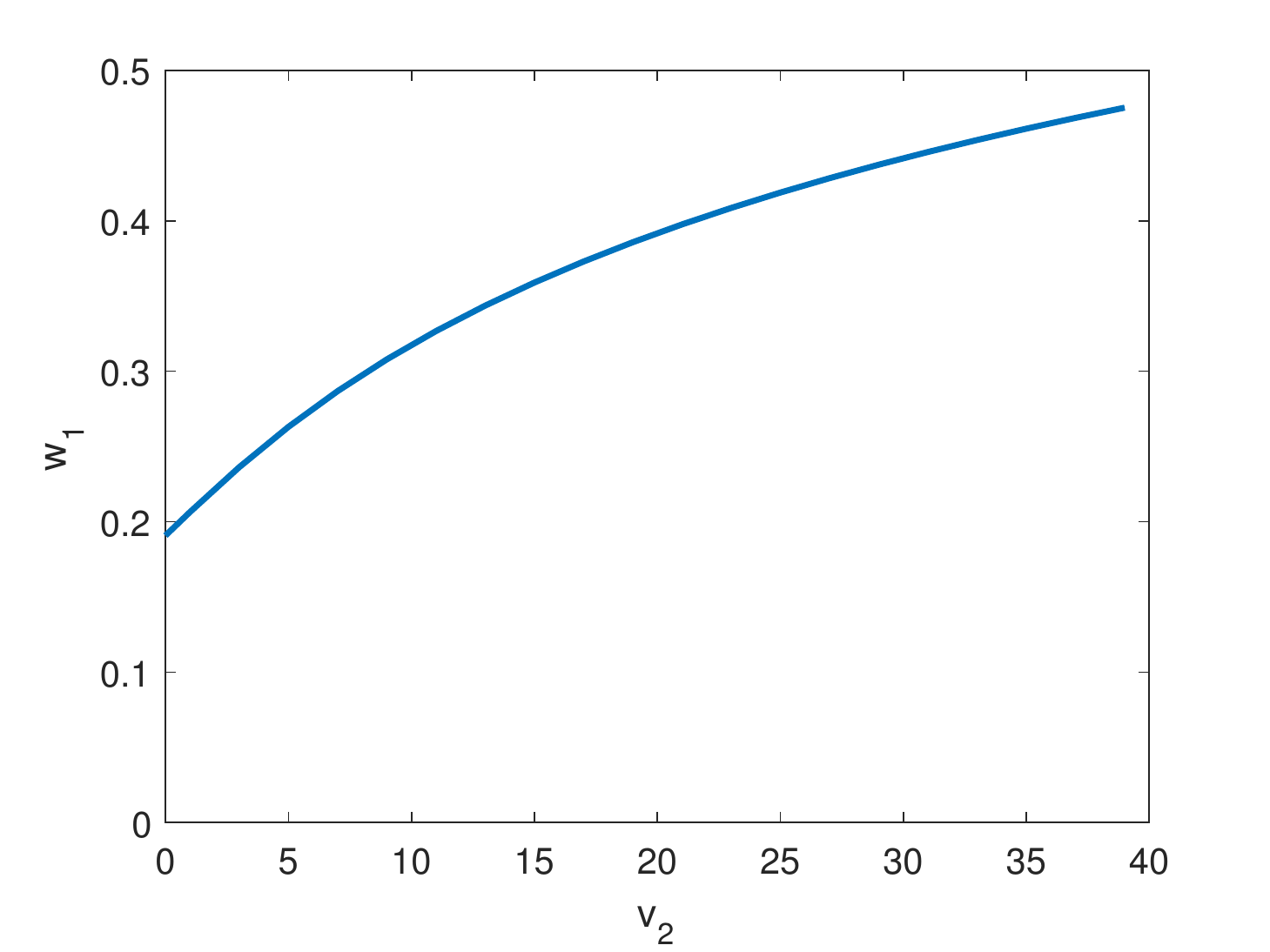}
		\caption{The dependence of $w_1^*$ for the $A$-optimal product design $w^* \otimes \alpha^*$ on the number of covariates $v_2$ in Example \ref{exContCov}, where $\Q = (-\1, \I)^T$, $\K=\I$, $v_1 = 3$, $\lambda_1=9$ and $\lambda_2 = \lambda_3 = 1$.}
		\label{fEx1}
	\end{figure}
	
	An $A$-optimal design, say $\xi_s^*$, that is supported on a smaller number of design points was computed by solving the linear program \eqref{eLP}. Although $\xi_s^*$ was allowed to attain non-zero values even outside of the support of $\xi^*$, it turns out that the support of $\xi_s^*$ is a subset of the support of $\xi^*$. Therefore, $\xi_s^*$ can also be expressed by considering only $\z \in \{-1,1\}^3$, see Table \ref{tblOptNonProd}. Whereas the support of the optimal product design is of size 24, the support of the optimal non-product design is only of size 10. One can also observe that the marginal treatment design of $\xi_s^*$ is $w^*$, but $\alpha^*$ is not its marginal covariate design.
	
	Let us demonstrate the usefulness of the sparsely supported designs for constructing the exact designs by the efficient rounding procedure by \cite{PukelsheimRieder92}. For $n=48$ trials the rounded product design $\xi_{ep}$ satisfies $\xi_{ep}(i,\z) = 2$ for each $i$ and for each $\z \in \{-1,1\}^3$; the rounded non-product design $\xi_{es}$ is given in Table \ref{tblOptNonProdR}. To compare the quality of the obtained designs, we calculate their efficiencies: $\mathrm{eff}(\xi_e) = \Phi(\N_\A(\xi_e/n))/\Phi(\N_\A(\xi^*))$, where $\xi_e$ is an exact design for $n$ trials and $\xi^*$ is the $\Phi$-optimal approximate design. We have $\mathrm{eff}(\xi_{ep}) = 0.9641$ for the rounded product design and $\mathrm{eff}(\xi_{es}) = 0.9991$ in the non-product case. The very high efficiency of the exact design $\xi_{es}$ relative to the optimal approximate design means that $\xi_{es}$ is either an optimal exact design, or very nearly optimal.
	
	The efficient rounding method requires that the number of trials $n$ be equal or greater than the support size of the approximate design. Therefore, the efficient rounding of $\xi^*$ cannot be performed for $n \leq 23$, as the support size of $\xi^*$ is $24$. However, the rounding of the non-product $\xi_s^*$ can be done even for $10 \leq n \leq 23$.  This is demonstrated in Figure \ref{fEx1b}, which shows efficiencies of the respective rounded designs for varying $n$. Note that the rounded non-product design is not always more efficient than the rounded product design.
\end{example}
\begin{figure}[t]
	\centering
	\includegraphics[width=0.5\textwidth]{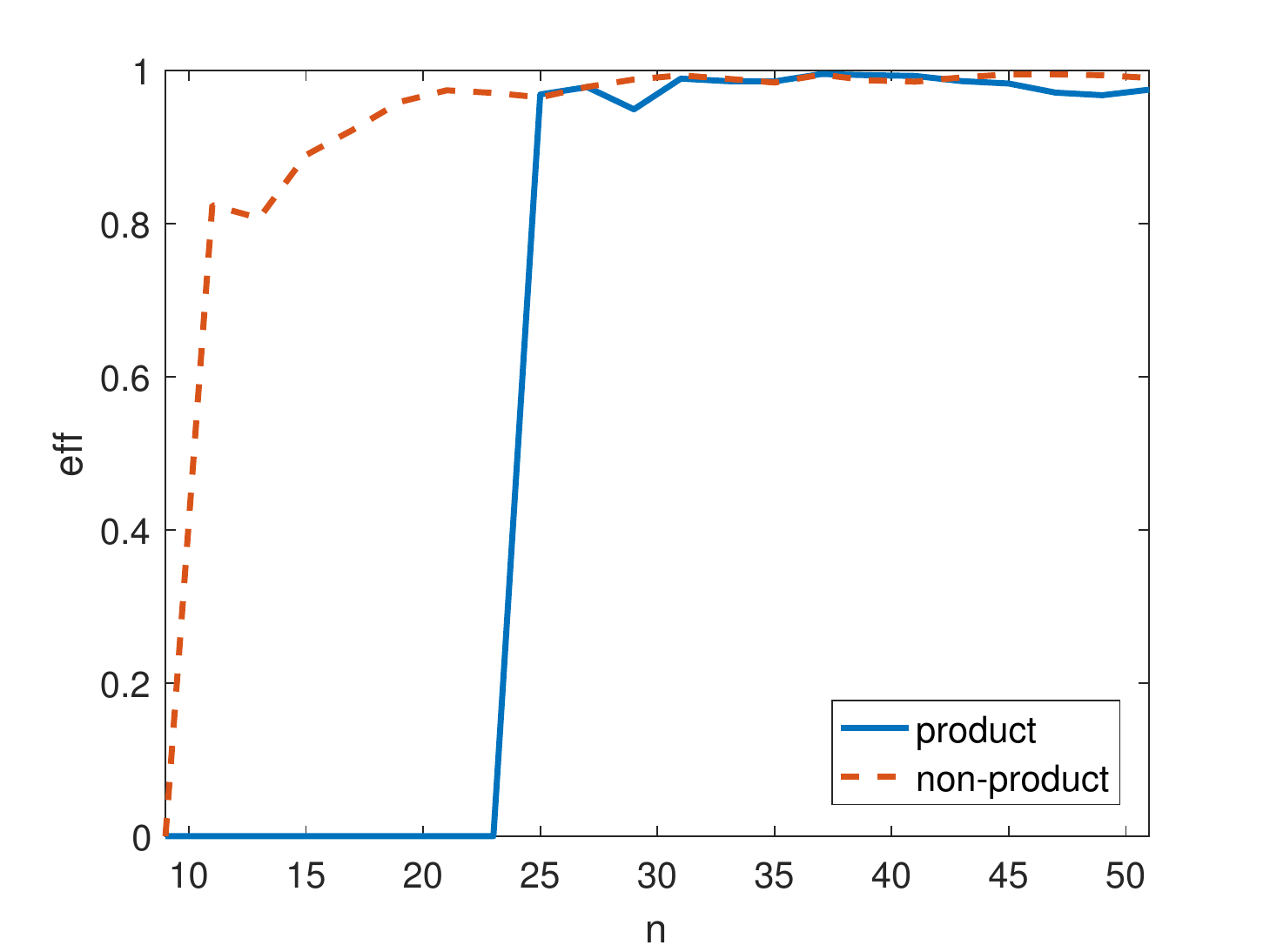}
	\caption{$A$-efficiencies of the rounded product and non-product designs in Example \ref{exContCov} for varying number of observations $n$. Zero efficiency means that for the corresponding $n$, the efficient rounding cannot be applied for the design.}
	\label{fEx1b}
\end{figure}

\begin{example}\label{exRowColumn}
	Suppose that the responses are affected, besides the treatment effects, by two qualitative covariates:
	$$
	y(i,j,k) = \tau_i + \mu + \eta_j + \rho_k + \varepsilon,
	$$
	where $j \in\{1,\ldots,r\}$ is the level of the first covariate, $k\in\{1,\ldots,c\}$ is the level of the second covariate, and $\eta_j$ and $\rho_k$ are the respective covariate effects. In the optimal design literature, such a model is usually known as two-way elimination of heterogeneity (e.g., see \cite{HedayatEA}) or a row-column model (e.g., see \cite{Jacroux}), where the trials are split into $r$ rows and $c$ columns. If the effects of one of the covariates is disregarded, we obtain the well-known model with block effects. Suppose that all the treatments are of the same interest, which can be expressed by estimating the treatment effects corrected for the mean $\tau_i - \bar{\tau}$; i.e., $\Q_1 = \I_{v_1} - \1_{v_1}\1_{v_1}^T/{v_1}$. Similarly, let $\K = \diag(\I_r - \1_r\1_r^T/r, \I_c - \1_c\1_c^T/c)$. Let $v_1 = 3$, $r = 3$, $c = 5$ and $\lambda = (4,1,1)^T$. We shall provide $E$-optimal designs for estimating $\A^T\theta$.
	
	
	The $E$-optimal marginal covariate design for $\K^T\beta$ is uniform: $\alpha^*(j,k) = 1/15$ for all $j,k$. The optimal criterion value $\Phi_E(\N_{\Q_2}(\alpha^*))$ is $0.2$, and then the corresponding optimal $w^*$ calculated using Theorem \ref{tPhipOpt} is given by the vector of treatment weights $(0.273, 0.364, 0.364)^T$. Then, the $E$-optimal product design $w^* \otimes \alpha^*$ is supported on all the $v_1 \times rc = 45$ design points, with values $\xi(1,j,k) = 0.0182$ and $\xi(2,j,k) = \xi(3,j,k) = 0.0242$ for each $j$ and $k$. By solving the linear program \eqref{eLP}, an $E$-optimal design $\xi_s^*$ with a smaller number of support points can be obtained. Such a design is given in Table \ref{tblOptNonProd2}. The non-product design $\xi_s^*$ has a smaller support of only size 28, as expressed by the large number of zeroes in Table \ref{tblOptNonProd2}. Table \ref{tblOptNonProd2R} gives the rounded non-product design $\xi_s^*$ for $n=40$ trials; for this number of trials, the efficient rounding of the optimal product design cannot be performed because of its large support size of $48>n$. The efficiency of the exact design obtained by the rounding of $\xi_s^*$ is $0.8493$.
\end{example}

\begin{table}[t]
	\centering
	\begin{tabular}{l|lllll}
		\hline\noalign{\smallskip}
		$\xi(1,j,k)$ & 1 & 2 & 3 & 4 & 5 \\
		\noalign{\smallskip}\hline\noalign{\smallskip}
		1  &  0.0303  &  0.0121  &  0.0303  &  0.0121  &  0.0061 \\
		2  &  0.0061  &  0.0303  &  0.0121  &  0.0242  &  0.0182 \\
		3  &  0.0182  &  0.0121  &  0.0121  &  0.0182  &  0.0303 \\
		\hline\noalign{\smallskip}
		$\xi(2,j,k)$ & 1 & 2 & 3 & 4 & 5 \\
		\noalign{\smallskip}\hline\noalign{\smallskip}
        1 & 0       &  0       &  0  &  0.0485 &   0.0727 \\
        2 & 0.0242  &       0  &  0.0727  &  0.0242    &     0 \\
        3 & 0.0485  &  0.0727  &       0   &      0     &    0 \\
		\hline\noalign{\smallskip}
		$\xi(3,j,k)$ & 1 & 2 & 3 & 4 & 5 \\
		\noalign{\smallskip}\hline\noalign{\smallskip}
		1    &     0  &  0.0727    &     0  &  0.0242  &  0.0242 \\
		2    &     0.0727   &      0     &    0   &      0  &  0.0485 \\
		3    &     0      &   0  &  0.0727  &  0.0485    &     0 \\
		\noalign{\smallskip}\hline
	\end{tabular}
	\caption{Optimal non-product design for Example \ref{exRowColumn}. The table contains the values $\xi(i,j,k)$, where $i$ is the treatment, $j$ is the first covariate level (expressed in rows) and $k$ is the second covariate level (expressed in columns). For example, the value $0.0121$ in row 2 and column 3 of the first ``block'' of the table means that $\xi(1,2,3) = 0.0121$.}
	\label{tblOptNonProd2}
\end{table}
\begin{table}[t]
	\centering
	\begin{tabular}{l|lllll}
		\hline\noalign{\smallskip}
		$\xi(1,j,k)$ & 1 & 2 & 3 & 4 & 5 \\
		\noalign{\smallskip}\hline\noalign{\smallskip}
		1   &  1  &   1  &   1  &   1   &  1 \\
		2   &  1  &   1  &   1  &   1   &  1 \\
		3   &  1  &   1  &   1  &   1   &  1 \\
		\hline\noalign{\smallskip}
		$\xi(2,j,k)$ & 1 & 2 & 3 & 4 & 5 \\
		\noalign{\smallskip}\hline\noalign{\smallskip}
		1  &   0   &  0   &  0   &  2  &   2 \\
		2  &   1   &  0   &  2   &  1  &   0 \\
		3  &   2   &  2   &  0   &  0  &   0 \\
		\hline\noalign{\smallskip}
		$\xi(3,j,k)$ & 1 & 2 & 3 & 4 & 5 \\
		\noalign{\smallskip}\hline\noalign{\smallskip}
		1  &   0  &   2  &   0  &   1  &   1 \\
		2  &   3  &   0  &   0  &   0  &   2 \\
		3  &   0  &   0  &   2  &   2  &   0 \\
		\noalign{\smallskip}\hline
	\end{tabular}
	\caption{Exact design for $n=40$ trials constructed from the optimal non-product design given in Table \ref{tblOptNonProd2}.}
	\label{tblOptNonProd2R}
\end{table}

\begin{example}\label{exTrendRes}
	Let us demonstrate the obtained theoretical results in settings, where there is no interest in the covariates. Consider a model with exponential trend effect, which is considered to be a nuisance:
	$$y(i,k) = \tau_i + \mu + g(k)\beta + \varepsilon,$$
	where $g(k) = e^k / \sum_j e^j$, $k=1,\ldots,d$. In each time $k = 1,\ldots,d$, the same number of trials must be performed, which results in the design constraint $\sum_i \xi(i,k) = 1/d$ for each $k$. Suppose that the interest lies in the treatment-control comparisons only; i.e., $\Q^T=(-\1_{v_1-1}, \I_{v_1-1})$ and $\A^T=(\Q^T,\0)$. Let $v_1=4$, $d=6$, $\lambda=(1,1,2,3)^T$ and consider the $A$-optimality criterion.
	
	The $A$-optimal marginal treatment design $w^*$ can be found by maximizing $\Phi_A(\N_{\Q}(w))$, and its values are given by the vector $(0.431, 0.249, 0.176, 0.144)^T$. The $A$-optimal product design is then $\xi^* = w^* \otimes \alpha$, where the marginal covariate design $\alpha_k = 1/d$ for $k=1,\ldots,d$ is implied by the design constraints $\sum_i \xi(i,k) = 1/d$. An optimal non-product design can be obtained as in Theorem \ref{tOptNonProd} by solving the linear program with constraints $\xi(i,k) \geq 0$ for all $i,k$, $\sum_i \xi(i,k) = 1/d$ for all $k$, and $\M(\xi)\G\A = \A$, where $\G=\diag(\M_1^{-1}(w^*),\0)$. The resulting design $\xi_s^*$, which is supported on 12 design points, is given in Table \ref{tblOptNonProd3}; the optimal product design is supported on all the 24 points.
	
	Because of the requirement that exactly one trial should be performed in each time moment, the usual rounding methods cannot be used to obtain efficient exact designs. A natural rounding method for the sparsely supported non-product designs is to select in each time the treatment with the highest design value; such a rounded design based on $\xi_s^*$ has efficiency 0.8871 and is given in Table \ref{tblOptNonProd3R}. Note that it is unclear how the optimal product designs can be rounded in the current example -- these designs do not provide any information on the suggested time sequence of the treatments, only the optimal treatment proportions are obtained. For instance, the aforementioned method would result for $\xi^*$ in a singular design that selects the first treatment in each time moment.
\end{example}

\begin{table}[t]
	\centering
	\begin{tabular}{l|llllll}
		\hline\noalign{\smallskip}
		$i \backslash k$ & 1 & 2 & 3 & 4 & 5 & 6 \\
		\noalign{\smallskip}\hline\noalign{\smallskip}
		1  &  0.0880  &  0.1667 &   0.0552  &  0.0166   &      0  &  0.1048 \\
		2  &  0.0787  &     0     &    0      &   0  &  0.1667  &  0.0036 \\
		3  &  0    &     0    &     0   & 0.1501   &      0  &  0.0260 \\
		4  &  0    &     0   & 0.1115    &     0     &    0  &  0.0323 \\
		\noalign{\smallskip}\hline
	\end{tabular}
	\caption{Optimal non-product design for Example \ref{exTrendRes}. $i$ -- treatment, $k$ -- time moment.}
	\label{tblOptNonProd3}
\end{table}
\begin{table}[t]
	\centering
	\begin{tabular}{l|llllll}
		\hline\noalign{\smallskip}
		$i \backslash k$ & 1 & 2 & 3 & 4 & 5 & 6 \\
		\noalign{\smallskip}\hline\noalign{\smallskip}
		1   &  1   &  1  &   0  &   0  &   0  &   1 \\
		2   &  0   &  0  &   0  &   0  &   1  &   0 \\
		3   &  0   &  0  &   0  &   1  &   0  &   0 \\
		4   &  0   &  0  &   1  &   0  &   0  &   0 \\
		\noalign{\smallskip}\hline
	\end{tabular}
	\caption{Exact design constructed from the optimal non-product design given in Table \ref{tblOptNonProd3}.}
	\label{tblOptNonProd3R}
\end{table}

\section{Discussion}

Although $D$-optimality is very beneficial analytically and computationally in the considered model, we showed that other eigenvalue-based optimality criteria like $A$- and $E$-optimality are not much more difficult to work with (Theorems \ref{tOptProd} and \ref{tPhipOpt}). Because the latter place actual emphasis on the selected system of interest, they seem to be more appropriate in the current settings.

The use of optimal product designs is also computationally very beneficial, because they allow one to reduce the $v_1 d$-dimensional optimal design problem in the multi-factor model \eqref{eModel1} to two simpler problems of sizes $v_1$ and $d$ for the two marginal models. However, the product designs suffer from large support sizes, as noted earlier. The support sizes of the product designs can in some settings be reduced by combinatorial approaches; e.g., by replacing full factorials by orthogonal arrays as in \cite{GrasshoffEA04}. Such reductions of support sizes are however very dependent on the particular well-studied models with a very regular structure that can be made use of in the combinatorial arguments.

To reduce the support sizes in more general settings, we provide an entire class of optimal designs (see Theorem \ref{tOptNonProd}) characterized by linear constraints. Such a characterization allows one to use linear programming for constructing optimal non-product designs. That the obtained constraints for optimal designs are linear is particularly useful, because the linear programming routines tend to provide vertex solutions, which have high numbers of zeros. As a result, the solvers ``automatically'' provide designs with small supports. As we demonstrated on examples in Section \ref{sExamples}, it seems that the method tends to allow for significant reductions of the support sizes. The use of linear programming also means that the algorithm is reliable and fast. Unlike the combinatorial approaches, the proposed method is not tailored for specific settings -- it can be used for any model of the form \eqref{eModel1}.  

We would like to emphasize that the proposed support reduction method is not particularly tied to the current model \eqref{eModel1}. As noted earlier, the method was applied for a similar model by \cite{RosaHarman16}. Moreover, the proposed approach can be used for any linear regression model where an optimal design $\xi^*$ with a large support is found, either analytically or by design algorithms. Then one can find designs that satisfy either $\M(\xi) = \M(\xi^*)$ or $\M(\xi)\G\A = \A$ with a suitably chosen $\G$ as in Theorem \ref{tOptNonProd} via linear programming to obtain vertex solutions. As shown in the present paper, the vertex solutions correspond to designs that tend to have smaller supports and are therefore generally more useful for practical purposes.

\bibliographystyle{plainnat}
\bibliography{rosa}

\end{document}